\documentclass{article}




     \usepackage[final,nonatbib]{neurips_2020}


\usepackage[utf8]{inputenc} 
\usepackage[T1]{fontenc}    
\usepackage{url}            
\usepackage{booktabs}       
\usepackage{amsfonts}       
\usepackage{nicefrac}       
\usepackage{microtype}      

\usepackage{algorithm,algorithmic}
\usepackage{amsmath}
\usepackage{amsthm}
\usepackage{epsfig}
\usepackage{tikz}
\usepackage{array}
\usepackage{multicol} 
\usepackage{setspace} 
\title{Revisiting Frank-Wolfe for Polytopes: \\ Strict Complementarity and Sparsity}

%

\author{%
  Dan Garber \\
  Department of Industrial Engineering and Management\\
  Technion - Israel Institute of Technology\\
  Haifa, Israel 3200003 \\
  \texttt{dangar@technion.ac.il} \\
}

\usepackage{amsmath}
\usepackage{algorithmic}
\usepackage{algorithm}
\usepackage{amsthm}
\usepackage{amsfonts}
\usepackage{comment}
\usepackage{graphicx}
\usepackage{hyperref}
\usepackage{color}
\newtheorem{assumption} {Assumption}

\newtheorem{theorem} {Theorem}
\newtheorem{lemma} {Lemma}
\newtheorem{definition} {Definition}

\newtheorem{observation} {Observation}

\def\x{{\mathbf{x}}}
\def\e{{\mathbf{e}}}

\def\u{{\mathbf{u}}}
\def\v{{\mathbf{v}}}
\def\z{{\mathbf{z}}}
\def\w{{\mathbf{w}}}
\def\y{{\mathbf{y}}}
\def\p{{\mathbf{p}}}
\def\b{{\mathbf{b}}}

\def\A{{\mathbf{A}}}

\def\M{{\mathbf{M}}}

\newcommand{\mW}{\mathcal{W}}

\newcommand{\mP}{\mathcal{P}}
\newcommand{\mV}{\mathcal{V}}
\newcommand{\mF}{\mathcal{F}}

\newcommand{\mX}{\mathcal{X}}

\newcommand{\mS}{\mathcal{S}}

\newcommand{\downsimplex}{\mS^{\downarrow}}
\newcommand{\conv}{\textrm{conv}}

\newcommand{\dist}{\textrm{dist}}
\newcommand{\nnz}{\textrm{nnz}}

\newcommand{\reals}{\mathbb{R}}

\begin{document}

\maketitle

\begin{abstract}
In recent years it was proved that simple modifications of the classical Frank-Wolfe algorithm (aka conditional gradient algorithm) for smooth convex minimization over convex and compact polytopes, converge with linear rate, assuming the objective function has the quadratic growth property. However, the rate of these methods depends explicitly on the dimension of the problem which cannot explain their empirical success for large scale problems. In this paper we first demonstrate that already for very simple problems and even when the optimal solution lies on a low-dimensional face of the polytope, such dependence on the dimension cannot be avoided in worst case. We then revisit the addition of a strict complementarity assumption already considered in Wolfe's classical book \cite{Wolfe1970}, and prove that under this condition, the Frank-Wolfe method with away-steps and line-search converges linearly with rate that depends explicitly only on the dimension of the optimal face.  
We motivate strict complementarity by proving that it implies sparsity-robustness of optimal solutions to noise.
\end{abstract}

\section{Introduction}
The Frank-Wolfe method (aka conditional gradient, see Algorithm \ref{alg:fw} below), originally due to \cite{FrankWolfe} is a classical first-order method for minimizing a smooth and convex function over a convex and compact set  \cite{FrankWolfe, Polyak, Jaggi13b}. It regained significant interest in the machine learning, optimization and statistics communities in recent years, with some notable applications including structured prediction \cite{Jaggi13a} and video co-localization \cite{Joulin2014efficient},  mainly due to two reasons: i)  in terms of the feasible set, the method only requires access to an oracle for minimizing a linear function over the set. Such an oracle could be implemented very efficiently for many feasible sets that arise in applications, as opposed to most standard first-order methods which usually require to solve non-linear problems over the feasible set (e.g., Euclidean projection onto the set) which can be much less efficient (e.g., see detailed examples in \cite{Jaggi13b, Hazan12}), and ii) when the number of iterations is not too large, the method naturally produces sparse solutions, in the sense that they are given explicitly as a convex combination of a small number of extreme points of the feasible set, which in many cases (e.g., optimization with sparse vectors /low-rank matrices) is much desired (\cite{Jaggi13b, Clarkson}).

The convergence rate of the method is of the order $O(1/t)$ where $t$ is the iteration counter. This rate is known to be tight and does not improve even when the objective function is strongly convex, a property that, when combined with smoothness,   is well known to yield a linear convergence rate, i.e., $\exp(-\Theta(t))$ for standard first-order methods such as the proximal/projected gradient methods. For optimization over convex and compact polytopes, in his classical book \cite{Wolfe1970}, Wolfe himself suggested a simple variant of the method that does not only add new vertices to the solution using the linear optimization oracle, but also moves away more aggressively from previously found vertices of the polytope, a step typically referred to as an away-step. Wolfe conjectured that with the addition of these away-steps and assuming strong convexity of the objective and an additional strict complementarity condition w.r.t. the optimal face of the polytope (see Assumption \ref{ass:strict} in the sequel), a linear convergence rate can be proved. Later, Gu{\'e}lat and Marcotte \cite{GueLat1986} proved this result rigorously but without giving an explicit rate or complexity analysis. Also, their convergence rate depends on the distance of the optimal solution from the boundary of the optimal face of the polytope, which can be arbitrarily bad. Their technique for proving the linear rate is also related to techniques used in \cite{BeckTaboule, GH15}.

In recent years Garber and Hazan \cite{Garber13, GH16} and then Lacoste-Julien and Jaggi \cite{Lacoste15} presented variants of the Frank-Wolfe method that utilize away-steps alongside new analyses, which resulted in provable and explicit linear rates without requiring strict complementarity conditions and without dependence on the location of the optimal solution. These results have encouraged much followup theoretical and empirical work e.g., \cite{Beck17, Osokin16, Locatello17, Gidel17, Pena19, Garber16, Pena16, Goldfarb17, Braun17, Gidel2018, Bashiri2017, Braun19, Lei2019,Diakonikolas19}, to name a few. However, the linear convergence rates in \cite{Garber13, GH16, Lacoste15} and follow-up works depend explicitly on the dimension of the problem (at least linear dependence, i.e., the convergence rate is of the form $\exp(-\Theta(t/d))$, where $d$ is the dimension)\footnote{While in \cite{Garber13, GH16} the dependence on the dimension is explicit in the convergence rate presented, in \cite{Lacoste15} it comes from the so-called pyramidal-width parameter, which already for the simplest polytopes such as the unit simplex or the hypercube $[0,1]^d$ causes the worst-case rate to depend linearly on the dimension.}.
 
Unfortunately, the explicit dependence on the dimension in all such works fails to explain and support the good empirical performance of these away-steps-based variants for large-scale problems. In particular, the examples constructed to show that explicit dependence on the dimension is mandatory in general (see for instance \cite{GH16}) have focused on the case that the optimal solution lies on a high-dimensional face of the polytope\footnote{This is not surprising, since when initialized with a vertex of the polytope, these methods increase the dimension of the active face, i.e., the face in which the current iterate lies, by at most one on each iteration.}. However, this leaves open the natural question: 
\begin{center}
\textit{Can explicit dependence on the dimension be avoided when the set of optimal solutions lies on a low-dimensional face of the polytope?}
\end{center}
Indeed, models in which the optimal solution satisfies some notion of sparsity are extremely common and important in statistics and machine learning. With this respect, the solution being on a low-dimensional face is analogues to sparsity in case the feasible set is a polytope, since it implies the solution could be expressed as a small number of extreme points of the polytope.

In this work we begin by answering the above question on the negative side, at least in worst-case. We give a construction of a very simple problem for which the optimal solution is a vertex of the polytope (i.e., lies on a face of dimension $0$), but for which all Frank-Wolfe-type methods (including those which use away-steps) which apply for arbitrary polytopes, require number of steps that depends explicitly on the dimension.
We then revisit the strict complementarity condition assumed in the works of Wolfe \cite{Wolfe1970} and  Gu{\'e}lat and Marcotte \cite{GueLat1986} (but not in the more modern works such as Garber and Hazan \cite{Garber13,GH16}  and Lacoste-Julien and Jaggi \cite{Lacoste15}). We first motivate this condition by showing how it implies a robustness-to-noise property of optimal solutions. That is, under this condition if the optimal solutions lie on a low-dimensional face of the polytope, then also the optimal solutions to a slightly-perturbed version of the problem must also lie on this face. We also provide a simple empirical evidence for strict complementarity in a standard setup of recovering a sparse vector from (random) linear measurements. 
We then use this condition to give a new analysis for the Frank-Wolfe method with away-steps and line-search that converges with linear rate that depends explicitly only on the dimension of the optimal face, and not on the dimension of the problem. 
In terms of techniques, we use the original algorithm used in the works of Gu{\'e}lat and Marcotte \cite{GueLat1986} and Lacoste-Julien and Jaggi \cite{Lacoste15} (Algorithm \ref{alg:awayfw} below), but with a new complexity analysis that is mostly inspired by that of Garber and Hazan \cite{GH16}.

It is important to note that while Garber and Meshi \cite{Garber16} gave a Frank-Wolfe variant for polytopes with linear rate that depends only on the dimension of the optimal face, their result can be efficiently implemented only for a very restrictive family of polytopes, and hence is far from generic. In a follow-up work by Bashiri and Zhang \cite{Bashiri2017} the approach in \cite{Garber16} was extended to general polytopes; however, i) as in \cite{Garber16} it requires an oracle for computing a specialized away-step which in general is not necessarily efficient to implement (e.g., in terms of the standard linear optimization oracle), and ii) the convergence rate for general polytopes still depends on the ambient dimension of the polytope (and is actually potentially worse than \cite{Garber13,GH16,Lacoste15}). On the other hand, in this work we do not impose any additional structural assumption on the feasible polytope or assume the availability of a new optimization oracle. See Table \ref{table:compare} for an example of comparison of related work in case the polytope is simply the unit simplex.

 
\begin{table*}\renewcommand{\arraystretch}{1.3}
{\footnotesize
\begin{center}
  \begin{tabular}{| c | c | c | c | c | c |} \hline
    ref.  & quad. growth? & strict comp.? & $\dist(\x^*,\partial\mF^*)$? & \#{}iterations to $\epsilon$ error \\ \hline
\  \cite{GueLat1986}    & $\checkmark$  & $\checkmark$ & $\checkmark$ &  no explicit bound proven \\ \hline
  \cite{GH16}    & $\checkmark$  & x & x & $(\beta/\alpha)d\log{1/\epsilon}$ \\ \hline
  \cite{Lacoste15}    & $\checkmark$  & x & x & $(\beta/\alpha)d\log{1/\epsilon}$ \\ \hline
  Thm. \ref{thm:main}    & $\checkmark$  & $\checkmark$  & x & $\frac{\alpha\beta}{\delta^2\dim\mF^*} + \frac{\beta}{\alpha}\dim\mF^*\log{1/\epsilon}
 $ \\ \hline
  \end{tabular}
\caption{Comparison of assumptions and convergence rates for optimization over the unit simplex. $\alpha,\beta,\dim\mF^*,\delta$ denote the quadratic growth, smoothness, dimension of optimal face and strict complementarity, respectively. $\dist(\x^*,\partial\mF^*)$ denotes the distance of the optimal solution $\x^*$ from the boundary of the optimal face $\mF^*$. We exclude \cite{Garber16, Bashiri2017} since their results for general polytopes are not necessarily efficiently implementable (e.g., in terms of the standard linear optimization oracle) and the convergence rate depends on the ambient dimension $d$. For our result we assume $\dim\mF^* >0$ since otherwise the method has finite convergence, see Theorem \ref{thm:1sparse} and Footnote 3.}
  \label{table:compare}
\end{center}
}
\end{table*}\renewcommand{\arraystretch}{1}

\section{Preliminaries}
Throughout this work we let $\Vert{\cdot}\Vert$ denote the standard Euclidean norm for vectors in $\reals^d$ and the spectral norm (i.e., largest singular value) for matrices in $\reals^{m\times d}$. We use lower-case boldface letters to denote vectors and upper-case bold-face letters to denote matrices. for a matrix $\A\in\reals^{m\times n}$ we let $\A(i)\in\reals^n$ denote the $i$th row of $\A$.

Throughout this work we consider the following convex optimization problem:
\begin{align}\label{eq:optProb}
\min_{\x\in\mP}f(\x),
\end{align}
where $\mP\subset\reals^d$ is a convex and compact polytope in the form $\mP := \{\x\in\reals^d ~|~\A_1\x = \b_1,~\A_2\x\leq \b_2\}$, $\A_1\in\reals^{m_1\times{}d}$, $\A_2\in\reals^{m_2\times{}d}$, $f:\reals^d\rightarrow\reals$ is convex and $\beta$-smooth (Lipschitz gradient). We let $\mV$ denote the set of vertices of $\mP$. We let $f^*$ denote the optimal value of Problem \eqref{eq:optProb} and we let $\mX^*\subseteq\mP$ denote the set of optimal solutions. For a face $\mF$ of $\mP$ we define:
{\small
\begin{align*}
\dim\mF := d - \dim\textrm{span}\{&\{\A_1(1),\cdots\A_1(m_1)\}\cup\{\A_2(i): i\in[m_2], \forall\x\in\mF:\A_2(i)^{\top}\x=\b_2(i)\}\}.
\end{align*}}
We let $\mF^*\subseteq\mP$ denote the lowest-dimensional face of $\mP$ containing the set of optimal solutions, i.e., $\mX^*\subseteq\mF^*$.
In the following we write $\mF^* = \{\x\in\reals^d~|~\A_1^*\x=\b_1^*,~\A_2^*\x\leq\b_2^*\}$. Observe that the rows of $\A_1^*$ are exactly the rows of $\A_1$ plus the rows of $\A_2$ which correspond to inequality constraints that are tight for all points in $\mF^*$ and the vector $\b_1^*$ is defined accordingly.  The rows of the matrix $\A_2^*$ are exactly the rows in $\A_2$ which correspond to inequality constraints that are satisfied by some of the points in $\mF^*$ but not by others, and the vector $\b_2^*$ is defined accordingly. 

We let $\mathbb{A}^*(\mP)$ denote the set of all $\dim\mF^*\times d$ matrices whose rows are linearly independent rows chosen from the rows of $\A_2^*$. Similarly to \cite{GH16}, we define $\psi^* = \max_{\M\in\mathbb{A}^*(\mP)}\Vert{\M}\Vert$ and $\xi^* = \min_{\v\in\mV\cap\mF^*}\min_i\{{\b_2^*(i) - \A_2^*(i)}^{\top}\v ~|~ \b_2^*(i) > {\A_2^*(i)}^{\top}\v\}$ (note here $\psi^*,\xi^*$ are only defined w.r.t. the optimal face $\mF^*$). We denote by $D$ and $D_{\mF^*}$ the Euclidean diameter of $\mP$ and $\mF^*$, respectively.

Given a set $\mW\subset\reals^d$ we let $\conv\{\mW\}$ denote the convex-hull of the points in $\mW$, we let $\nnz(\cdot)$ denote the number of nonzero entries in a given vector, and for any positive integer $n$, we let $\mS_n$ denote the unit simplex in $\reals^n$. Given a point $\x\in\reals^d$ and a set $\mW\subset\reals^d$ we denote $\dist(\x,\mW) = \inf_{\y\in\mW}\Vert{\y-\x}\Vert$.

Throughout this paper, unless stated otherwise, we assume the objective function $f(\cdot)$ satisfies the quadratic growth property, which is a weaker assumption than assuming strong-convexity, and is common to all linearly-converging Frank-Wolfe variants previously studied. 
\begin{assumption}[quadratic growth]\label{ass:QG} 
$\exists\alpha >0$ such that  $\forall\x\in\mP$: $ \dist(\x,\mX^*)^2 \leq 2\alpha^{-1}\left({f(\x)-f^*}\right)$.
\end{assumption}

\begin{theorem}[Hoffman's bound (see for instance \cite{Beck17, Garber19}]
Suppose $\mP\subset\reals^d$ is a convex and compact polytope and let $f(\x)$ be of the form $f(\x)=g(\A\x)+\b^{\top}\x$, where $g:\reals^m\rightarrow\reals$ is $\alpha_g$-strongly convex, $\A\in\reals^{m\times d},\b\in\reals^d$. Then, $f(\cdot)$ has the quadratic growth property with some parameter $\alpha >0$ (which depends on $\alpha_g$, $\A$ and the geometry of the polytope $\mP$, see for instance \cite{Beck17, Garber19}).
\end{theorem}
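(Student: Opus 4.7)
The plan is to reduce the theorem to Hoffman's error bound for linear systems, with strong convexity of $g$ used to pin down the optimal set. The first step is to show that $\mX^*$ is described by only finitely many linear (in)equalities in addition to those defining $\mP$. For any pair $\x_1^{\star},\x_2^{\star}\in\mX^*$, applying $\alpha_g$-strong convexity of $g$ at the midpoint $\tfrac{1}{2}(\x_1^{\star}+\x_2^{\star})\in\mP$ yields
\[
f\!\left(\tfrac{\x_1^{\star}+\x_2^{\star}}{2}\right) \;\leq\; f^* - \tfrac{\alpha_g}{8}\|\A(\x_1^{\star}-\x_2^{\star})\|^2,
\]
so minimality forces $\A\x_1^{\star}=\A\x_2^{\star}=:\y^*$; then $f(\x_i^{\star})=g(\y^*)+\b^{\top}\x_i^{\star}$ forces $\b^{\top}\x_1^{\star}=\b^{\top}\x_2^{\star}=:c^*$. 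Conversely, any $\x\in\mP$ with $\A\x=\y^*$ and $\b^{\top}\x\leq c^*$ satisfies $f(\x)\leq f^*$, hence lies in $\mX^*$. Therefore $\mX^*=\{\x\in\reals^d:\A_1\x=\b_1,\ \A_2\x\leq\b_2,\ \A\x=\y^*,\ \b^{\top}\x\leq c^*\}$.

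Next I would invoke Hoffman's lemma on this system to obtain a constant $\theta>0$, depending only on the matrices $\A_1,\A_2,\A,\b$, such that for every $\x\in\mP$
\[
\dist(\x,\mX^*)^2 \;\leq\; \theta^2\bigl(\|\A\x-\y^*\|^2 + [\b^{\top}\x-c^*]_+^2\bigr),
\]
since the residuals coming from the constraints of $\mP$ vanish on $\mP$. It then suffices to bound each of the two remaining residuals above by a linear function of $f(\x)-f^*$.

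For the equality residual, fix any $\x^*\in\mX^*$; strong convexity of $g$ combined with first-order optimality gives
\[
f(\x)-f^* \;\geq\; \nabla f(\x^*)^{\top}(\x-\x^*) + \tfrac{\alpha_g}{2}\|\A\x-\y^*\|^2 \;\geq\; \tfrac{\alpha_g}{2}\|\A\x-\y^*\|^2.
\]
For the inequality residual, decompose $\b^{\top}\x-c^* = (f(\x)-f^*) - (g(\A\x)-g(\y^*))$; since $g$ is real-valued and convex on $\reals^m$, it is Lipschitz on the compact image $\A\mP$ with some constant $L_g$, so $|g(\A\x)-g(\y^*)|\leq L_g\|\A\x-\y^*\|$ and hence $[\b^{\top}\x-c^*]_+^2 \leq 2(f(\x)-f^*)^2 + 2L_g^2\|\A\x-\y^*\|^2$. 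Compactness of $\mP$ gives a uniform bound $M\geq\sup_{\x\in\mP}(f(\x)-f^*)$, converting $(f(\x)-f^*)^2$ into $M(f(\x)-f^*)$. Collecting the bounds produces the quadratic growth inequality with some $\alpha>0$ determined by $\theta,\alpha_g,L_g,M$ (and the geometry of $\mP$ through $\theta$).

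The main subtlety is handling the inequality residual: strong convexity of $g$ natively controls only the equality term $\|\A\x-\y^*\|^2$, while the linear functional $\b^{\top}\x$ has no curvature of its own to leverage. Compactness of $\mP$, which simultaneously furnishes the Lipschitz constant $L_g$ on $\A\mP$ and the uniform upper bound $M$ on $f-f^*$, is precisely what absorbs the ostensibly-quadratic-in-$f(\x)-f^*$ tail into a linear one and closes the argument.
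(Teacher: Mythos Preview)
The paper does not prove this theorem; it states it as a known consequence of Hoffman's bound and defers to the references \cite{Beck17, Garber19} for a proof. There is therefore no ``paper's own proof'' to compare against.

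Your argument is correct and follows the standard route one finds in those references: (i) use strong convexity of $g$ to show that $\A\x$ and $\b^{\top}\x$ are constant on $\mX^*$, hence $\mX^*$ is polyhedral; (ii) invoke Hoffman's lemma to bound $\dist(\x,\mX^*)$ by the residual in the two additional constraints $\A\x=\y^*$ and $\b^{\top}\x\leq c^*$; (iii) control both residuals by $f(\x)-f^*$. The use of local Lipschitzness of $g$ on the compact image $\A\mP$ is legitimate since a finite convex function on $\reals^m$ is automatically locally Lipschitz, and the compactness trick $(f(\x)-f^*)^2\leq M(f(\x)-f^*)$ is exactly what is needed to absorb the linear term. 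One cosmetic remark: since you already showed $\b^{\top}\x=c^*$ on $\mX^*$, you could equally write the extra constraint as an equality, which would let you bound $|\b^{\top}\x-c^*|$ directly instead of its positive part; either way the estimates go through unchanged.
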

In particular, the highly important case of $f(\x) = \frac{1}{2}\Vert{\A\x-\b}\Vert^2$, where $\A$ is not necessarily full row-rank, satisfies the quadratic growth property w.r.t. any convex and compact polytope.

\subsection{Lower bound for Frank-Wolfe-type methods}
We now prove our claim that already for very simple problems and even when the (unique) optimal solution is a vertex of the polytope (i.e., $\dim\mF^*= 0$), any Frank-Wolfe-type method (which we define next), even with away-steps, must exhibit at least linear dependence on the dimension, in worst case.

\begin{definition}[Frank-Wolfe-type method]\label{def:fwmethod}
An iterative algorithm for Problem \eqref{eq:optProb} is a Frank-Wolfe-type method if on each iteration $t$, it performs a single call to the linear optimization oracle of $\mP$ w.r.t. the point $\nabla{}f(\x_t)$, i.e., computes some $\u_t\in\arg\min_{\v\in\mV}\v^{\top}\nabla{}f(\x_t)$, where $\x_t$ is the current iterate, and produces the next iterate $\x_{t+1}$ by taking some convex combination of the points in $\{\x_1,\u_1,\dots,\u_t\}$, where $\x_1$ is the initialization point.
\end{definition}

We let $\downsimplex_d$ denote the down-closed unit simplex in $\reals^d$, i.e., $\downsimplex_d := \{\x\in\reals^d~|~\x \geq 0, \sum_{i=1}^d\x(i) \leq 1\}$.

\begin{theorem}\label{thm:lowerbound}
Consider the optimization problem $\min_{\x\in\downsimplex_d}\{f(\x):=\frac{1}{2}\Vert{\x}\Vert^2\}$.
Then, any Frank-Wolfe-type method, when initialized with some standard basis vector $\e_i$, $i\in[d]$, must perform in worst case $\Omega(d)$ steps to obtain approximation error lower than $1/d$.
\end{theorem}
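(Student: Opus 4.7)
The plan is to exhibit an adversarial linear-minimization oracle that keeps any Frank-Wolfe-type iterate supported on at most $t$ coordinates after $t$ iterations, and then to lower-bound $\frac{1}{2}\|\x\|^2$ over any such low-dimensional face of $\downsimplex_d$.

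First I would observe that since $\nabla f(\x)=\x$ and the vertex set of $\downsimplex_d$ is $\mV=\{\mathbf{0},\e_1,\dots,\e_d\}$, the minimum value of the oracle inner product is $\min\{0,\min_j\x_t(j)\}=0$, attained simultaneously by $\mathbf{0}$ and by every $\e_j$ with $j\notin\supp(\x_t)$. Hence, whenever $\supp(\x_t)\neq[d]$, a worst-case (adversarial) linear oracle is free to resolve the tie by returning such an $\e_j$.

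Second, using this tie-breaking, I would inductively construct the adversary's responses: starting from $\x_1=\e_i$, at iteration $s$ I pick $\u_s=\e_{j_s}$ with $j_s$ a fresh index not in $\{i,j_1,\dots,j_{s-1}\}$; by the previous observation this is a valid oracle output, and it remains possible for all $s\leq d-1$. By the definition of a Frank-Wolfe-type method, the iterate must satisfy $\x_t\in\conv\{\x_1,\u_1,\dots,\u_{t-1}\}$, which is contained in the simplicial face of $\downsimplex_d$ spanned by the at most $t$ distinct standard basis vectors $\e_i,\e_{j_1},\dots,\e_{j_{t-1}}$.

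Third, I would bound $f$ from below on this face: any convex combination $\x=\sum_{s=1}^{t}\lambda_s\e_{k_s}$ with $\lambda_s\geq 0$ and $\sum_s\lambda_s=1$ satisfies $\|\x\|^2=\sum_s\lambda_s^2\geq 1/t$ by Cauchy--Schwarz (attained at the barycenter). Therefore $f(\x_t)\geq 1/(2t)$, so achieving $f(\x_t)<1/d$ forces $t>d/2$, yielding the claimed $\Omega(d)$ lower bound.

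The only delicate point is confirming that the adversarial tie-breaking is legitimate under the stated definition of a Frank-Wolfe-type method, and that the algorithm cannot exploit its freedom in choosing convex combination weights to escape the low-dimensional face. The latter is immediate because convex combinations preserve support: no matter how cleverly the algorithm weights the available vertices $\{\x_1,\u_1,\dots,\u_{t-1}\}$, the iterate $\x_t$ remains inside the simplex they span. The rest is elementary geometry---projecting $\mathbf{0}$ onto the $(t-1)$-simplex---and the resulting bound $1/(2t)$ matches the target threshold $1/d$ with only a constant-factor gap, giving $\Omega(d)$.
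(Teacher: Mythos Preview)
Your argument is correct and follows essentially the same route as the paper: an adversarial oracle breaks ties in favor of a standard basis vector $\e_j$ with $\x_t(j)=0$ (which is always a valid minimizer of $\v^\top\x_t$ as long as $\nnz(\x_t)<d$), forcing the iterate to stay in a simplex spanned by basis vectors, and then one lower-bounds $\tfrac12\|\x\|^2$ over that simplex. The paper's proof is a touch terser---it simply notes that before $d$ oracle calls all iterates lie in $\conv\{\e_1,\dots,\e_d\}$ and concludes $f(\x_t)-f^*\geq 1/d$---whereas you explicitly pick \emph{fresh} coordinates and use the sharper support bound $\nnz(\x_t)\leq t$ to get $f(\x_t)\geq 1/(2t)$, which cleanly gives $t>d/2$. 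Both routes yield $\Omega(d)$; your version just tracks the constants a bit more carefully.
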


\begin{proof}
Clearly, the unique optimal solution is $\x^* = \mathbf{0}$ and $f(\x^*) = 0$. Consider now the iterates of some Frank-Wolfe-type method and recall that $\x_1 = \e_i$ for some $i\in[d]$. Observe now that for any iteration $t$ for which it holds that $\nnz(\x_t) = \nnz(\nabla{}f(\x_t)) < d$ it follows that a valid output for the linear optimization oracle is a standard basis vector $\e_j$ such that $\x_t(j) = 0$. Thus, before making $d$ calls to linear optimization oracle, all iterates must lie in $\conv\{\e_1,\dots,\e_d\}$ and hence for all $t\le d$ we have $f(\x_t)-f^* \geq 1/d$.
\end{proof}

\subsection{Strict complementarity condition}
We now formally present the strict complementarity condition, which matches the one assumed in the early works of Wolfe \cite{Wolfe1970} and  Gu{\'e}lat and Marcotte \cite{GueLat1986}.

\begin{assumption}[strict complementarity]\label{ass:strict}
There exist $\delta > 0$ such that for all $\x^*\in\mX^*$ and $\v\in\mV$: if $\v\in\mV\setminus\mF^*$ then $(\v-\x^*)^{\top}\nabla{}f(\x^*) \geq \delta$; otherwise, if $\v\in\mV\cap\mF^*$ then $(\v-\x^*)^{\top}\nabla{}f(\x^*) = 0$.
\end{assumption}

To motive Assumption \ref{ass:strict} in the context of optimization with sparse/low-dimensional models under noisy data, we bring the following theorem which states that if the strict complementarity condition holds then, even if instead of directly optimizing $f(\cdot)$ over the polytope $\mP$, we only optimize a noisy version of it $\tilde{f}(\cdot)$, then as long as the noise level is controlled by the strict complementarity parameter $\delta$, the optimal face is preserved. That is, the optimal solutions to the perturbed problem all lie within the optimal face w.r.t. the original objective $f(\cdot)$.

\begin{theorem}\label{thm:robust}[strict complementarity implies robustness]
Let $f(\cdot),\tilde{f}(\cdot)$ be two $\beta$-smooth, convex functions with quadratic growth with parameter $\alpha>0$ over the polytope $\mP$. Suppose also that for all $\x\in\mP$, $\Vert{\nabla{}f(\x) - \nabla{}\tilde{f}(\x)}\Vert \leq \nu$. Let $\mF^*$ and $\tilde{\mF}^*$ be the optimal faces w.r.t. the objective functions $f(\cdot)$ and $\tilde{f}(\cdot)$, respectively, and suppose the strict complementarity condition (Assumption \ref{ass:strict}) holds w.r.t. function $f(\cdot)$ and face $\mF^*$ with parameter $\delta >0$. If $\nu < \frac{\delta}{D(1 + 2\beta/\alpha)}$ then $\tilde{\mF}^*\subseteq\mF^*$.
\end{theorem}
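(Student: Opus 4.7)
The plan is to show every $\tilde{\x}^*\in\tilde{\mX}^*$ lies in $\mF^*$, which yields $\tilde{\mF}^*\subseteq\mF^*$ since $\tilde{\mF}^*$ is the smallest face of $\mP$ containing $\tilde{\mX}^*$. Fix such a $\tilde{\x}^*$ and take any convex decomposition $\tilde{\x}^* = \sum_{\v\in\mV}\lambda_\v\v$. Set $\mu := \sum_{\v\notin\mF^*}\lambda_\v$ and write $\tilde{\x}^* = (1-\mu)\x_F + \mu\y$ with $\x_F\in\conv\{\mV\cap\mF^*\}\subseteq\mF^*$ and $\y\in\conv\{\mV\setminus\mF^*\}$. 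Because $\mF^*$ is a face of $\mP$, a point of $\mP$ belongs to $\mF^*$ iff every inequality of $\mP$ tight throughout $\mF^*$ is also tight at that point; applied to $\tilde{\x}^*$, this forces $\lambda_\v = 0$ for every $\v\notin\mF^*$. Hence $\tilde{\x}^*\in\mF^*\Leftrightarrow\mu=0$, and it suffices to rule out $\mu>0$.

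A preliminary estimate I need is an upper bound on $\|\tilde{\x}^*-\x^*\|$ where $\x^* := \Pi_{\mX^*}(\tilde{\x}^*)$. Splitting $f(\tilde{\x}^*) - f(\x^*) = [\tilde{f}(\tilde{\x}^*)-\tilde{f}(\x^*)] + [(f-\tilde{f})(\tilde{\x}^*)-(f-\tilde{f})(\x^*)]$, the first bracket is $\le 0$ by optimality of $\tilde{\x}^*$ for $\tilde{f}$ over $\mP$, while the mean value inequality applied to $f-\tilde{f}$ (whose gradient has norm $\le\nu$ on $\mP$) bounds the second by $\nu\|\tilde{\x}^*-\x^*\|$. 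Combined with the quadratic growth of $f$ this gives $\|\tilde{\x}^*-\x^*\|^2 \le (2/\alpha)(f(\tilde{\x}^*)-f^*) \le (2\nu/\alpha)\|\tilde{\x}^*-\x^*\|$, hence $\|\tilde{\x}^*-\x^*\|\le 2\nu/\alpha$.

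Now suppose $\mu>0$. Optimality of $\tilde{\x}^*$ for $\tilde{f}$ applied to $\x_F\in\mP$ gives $(\x_F-\tilde{\x}^*)^\top\nabla\tilde{f}(\tilde{\x}^*)\ge 0$, and using $\tilde{\x}^*-\x_F = \mu(\y-\x_F)$ this rewrites as $(\x_F-\y)^\top\nabla\tilde{f}(\tilde{\x}^*)\ge 0$. I would then upper bound the same quantity by decomposing $\nabla\tilde{f}(\tilde{\x}^*) = \nabla f(\x^*) + [\nabla f(\tilde{\x}^*)-\nabla f(\x^*)] + [\nabla\tilde{f}(\tilde{\x}^*)-\nabla f(\tilde{\x}^*)]$. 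For the main piece, strict complementarity (Assumption~\ref{ass:strict}) gives $(\x_F-\x^*)^\top\nabla f(\x^*) = 0$ and $(\y-\x^*)^\top\nabla f(\x^*)\ge\delta$ as convex combinations of the vertex-wise identities, so $(\x_F-\y)^\top\nabla f(\x^*)\le -\delta$. The remaining two pieces are bounded via Cauchy--Schwarz together with $\|\x_F-\y\|\le D$: the second by $D\cdot\beta\|\tilde{\x}^*-\x^*\| \le 2D\beta\nu/\alpha$ (using $\beta$-smoothness and the preceding distance bound), and the third by $D\nu$ (gradient-closeness). Summing, $(\x_F-\y)^\top\nabla\tilde{f}(\tilde{\x}^*) \le -\delta + D\nu(1+2\beta/\alpha)$, which is strictly negative under the hypothesis $\nu < \delta/(D(1+2\beta/\alpha))$, contradicting the $\ge 0$ obtained from optimality.

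The main conceptual point to get right is the equivalence $\tilde{\x}^*\in\mF^* \Leftrightarrow \mu=0$, which is purely combinatorial and relies only on the face structure of $\mP$ rather than on any analytic property of $f$ or $\tilde{f}$. Once that is in place, the rest is a routine three-term perturbation estimate, and the constants combine precisely to match the numerator $\delta$ and the denominator $D(1+2\beta/\alpha)$ appearing in the theorem's hypothesis on $\nu$.
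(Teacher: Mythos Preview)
Your proposal is correct and follows essentially the same approach as the paper: both obtain the bound $\Vert\tilde{\x}^*-\x^*\Vert\le 2\nu/\alpha$ via quadratic growth, then use a three-term decomposition of $\nabla\tilde{f}(\tilde{\x}^*)$ around $\nabla f(\x^*)$ together with strict complementarity to reach a first-order contradiction with the optimality of $\tilde{\x}^*$. The only cosmetic differences are that you invoke the optimality condition along the aggregate direction $\x_F-\y$ while the paper works with a single off-face vertex $\v$ and the point $\x^*$, and you derive the distance bound via the mean-value inequality for $f-\tilde{f}$ rather than via convexity of $f$; neither changes the structure or the constants.
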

\begin{proof}
Let $\mX^*$ and $\tilde{\mX}^*$ denote the sets of optimal solutions w.r.t. $f(\cdot)$ and $\tilde{f}(\cdot)$, respectively. Fix some $\tilde{\x}^*\in\tilde{\mX}^*$ and let $\x^*\in\mX^*$ be the point in $\mX^*$ closest in Euclidean distance to $\tilde{\x}^*$.
From the convexity of $\tilde{f}(\cdot)$ we have that
{\small
\begin{align*}
f(\x^*)-f(\tilde{\x}^*)  &\geq (\x^*-\tilde{\x}^*)^{\top}\nabla{}f(\tilde{\x}^*) = (\x^*-\tilde{\x}^*)^{\top}\nabla\tilde{f}(\tilde{\x}^*) + (\x^*-\tilde{\x}^*)^{\top}(\nabla{}f(\tilde{\x}^*) - \nabla\tilde{f}(\tilde{\x^*}))\\
  &\geq 0 - \Vert{\tilde{\x}^*-\x^*}\Vert\nu = - \Vert{\tilde{\x}^*-\x^*}\Vert\nu,
\end{align*}}
where the last inequality follows from the optimality of $\tilde{\x}^*$ w.r.t. $\tilde{f}(\cdot)$ and the Cauchy-Schwarz inequality. Using the above inequality and the quadratic growth of $f(\cdot)$ we have that
$\Vert{\tilde{\x}^*-\x^*}\Vert^2 = \dist(\tilde{\x}^*,\mX^*)^2 \leq \frac{2}{\alpha}\big({f(\tilde{\x}^*)-f(\x^*)}\big) \leq \frac{2}{\alpha}\Vert{\tilde{\x}^*-\x^*}\Vert\nu$.
Thus, we have that $\Vert{\tilde{\x}^*-\x^*}\Vert \leq 2\nu/\alpha$.
It thus follows that for any vertex $\v\in\mV\setminus\mF^*$,
{\small
\begin{align*}
(\v-\x^*)^{\top}\nabla\tilde{f}(\tilde{\x}^*) &= (\v-\x^*)^{\top}\nabla{}f(\x^*) + (\v-\x^*)^{\top}(\nabla\tilde{f}(\tilde{\x}^*)-\nabla\tilde{f}(\x^*)) \\
&+  (\v-\x^*)^{\top}(\nabla\tilde{f}(\x^*)-\nabla{}f(\x^*)) \\
&\geq (\v-\x^*)^{\top}\nabla{}f(\x^*) - \Vert{\v-\x^*}\Vert\Big(\beta\Vert{\tilde{\x}^*-\x^*}\Vert + \Vert{\nabla\tilde{f}(\x^*)-\nabla{}f(\x^*)}\Vert\Big)\\
&\geq (\v-\x^*)^{\top}\nabla{}f(\x^*) - D(2\nu\beta/\alpha + \nu) \geq \delta - D\nu(1 + 2\beta/\alpha),
\end{align*}}
where the first inequality follows from the smoothness of $\tilde{f}(\cdot)$ and the Cauchy-Schwarz inequality, and the last inequality follows from the strict complementarity assumption.
Thus, we have that whenever $\nu < \frac{\delta}{D(1 + 2\beta/\alpha)}$ it must hold that $\tilde{\x}^*\in\mF^*$. Otherwise, due to the differentiability of $\tilde{f}(\cdot)$, moving arbitrarily small positive mass from a vertex $\v\in\mV\setminus\mF^*$ in the convex decomposition of $\tilde{\x}^*$ (such $\v$ must exist since $\tilde{\x}^*\notin\mF^*$), to the point $\x^*$ will reduce the objective value w.r.t. $\tilde{f}(\cdot)$, hence contradicting the optimality of $\tilde{\x}^*$. Thus, we have proved that $\tilde{\mF^*}\subseteq\mF^*$.
\end{proof}

To further motivate the strict complementarity assumption, in Table \ref{table:strict} we bring empirical evidence for a standard setup of recovering a sparse vector from (random) linear measurements. In particular it is observable that the strict complementarity parameter $\delta$ does not change substantially as the dimension $d$ grows. Hence, in such a setup it is potentially much preferable to obtain convergence rates that depend on $\delta$ and $\dim\mF^*$ (which in this case corresponds to the number of non-zero entries in the optimal solution) rather than on the ambient dimension $d$.

\begin{table*}\renewcommand{\arraystretch}{1.3}
{\footnotesize
\begin{center}
  \begin{tabular}{| c | c | c |} \hline
    dimension  ($d$) & avg. recovery error & avg. strict complementarity parameter ($\delta$) \\ \hline
400 & 0.0117 & 0.8594 \\ \hline
600 & 0.0132 & 0.5090 \\ \hline
1000 & 0.0128 & 0.4505 \\ \hline
1200 & 0.0142 & 0.5176 \\ \hline
  \end{tabular}
\caption{Recovering a random sparse vector in the unit simplex $\x_0\in\mS_d$ with $\nnz(\x_0) = 5$ from noisy measurements $\b = \A\x_0 + c\Vert{\A\x_0}\Vert\v$, where $\A\in\reals^{m\times d}$ has i.i.d. standard Gaussian entries, $\v$ is a random unit vector and $c=0.2$. We set $m=125$. For recovery we solve $\x^*\in\arg\min_{\x\in\tau\mS_d}\Vert{\A\x-\b}\Vert^2$, where $\tau = 0.7$ (we need to scale down the unit simplex to avoid fitting the noise). The recovery error is $\Vert{\tau^{-1}\x^*-\x_0}\Vert^2$. The results are averaged over 50 i.i.d. runs.}
  \label{table:strict}
\end{center}
}
\end{table*}\renewcommand{\arraystretch}{1}

\section{Main Results}

\begin{algorithm}
\caption{Frank-Wolfe Algorithm with line-search}
\label{alg:fw}
\begin{algorithmic}[1]
\STATE $\x_1 \gets $ some arbitrary point in $\mP$
\FOR{$t=1,2\dots $}
\STATE $\u_t \gets \arg\min_{\u\in\mV}\u^{\top}\nabla{}f(\x_t)$
\STATE $\x_{t+1} \gets (1-\eta_t)\x_t+\eta_t\u_t$ where $\eta_t \gets \arg\min_{\eta\in[0,1]}f((1-\eta)\x_t+\eta\u_t)$
\ENDFOR
\end{algorithmic}
\end{algorithm} 

Before we get to the main result, we first begin with a very simple result proving that if the optimal solution is a vertex and the strict complementarity condition holds, then the standard Frank-Wolfe method with line-search (Algorithm \ref{alg:fw}) finds the optimal solution within a finite number of iterations, without even requiring the objective to satisfy the quadratic growth property. Such a result was essentially already proved in \cite{GueLat1986}, though they \textbf{did assume} strong convexity of the objective, and did not give explicit complexity analysis (i.e., only proved finiteness).

\begin{theorem}\label{thm:1sparse}
Suppose  $\mF^*=\{\x^*\}$ where $\x^*\in\mV$. Then, under Assumption \ref{ass:strict}, and \textbf{without assuming} quadratic growth of $f(\cdot)$, Algorithm \ref{alg:fw} finds the optimal solution in $O(\beta{}D^2/\delta)$ iterations.
\end{theorem}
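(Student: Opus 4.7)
The plan is to show that after a bounded number of iterations, the linear-oracle vertex $\u_t$ returned by Algorithm~\ref{alg:fw} must equal $\x^*$. Once this happens, line-search picks $\eta_t=1$ (since $\x^*$ is the global minimum and lies on the segment $[\x_t,\u_t]$), so $\x_{t+1}=\x^*$ and the algorithm terminates exactly.

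I would first derive a quantitative sufficient condition for $\u_t=\x^*$ from Assumption~\ref{ass:strict} and smoothness. For any $\v\in\mV\setminus\{\x^*\}$, write
\[
(\v-\x^*)^\top\nabla f(\x_t)=(\v-\x^*)^\top\nabla f(\x^*)+(\v-\x^*)^\top\bigl(\nabla f(\x_t)-\nabla f(\x^*)\bigr).
\]
Assumption~\ref{ass:strict} lower-bounds the first summand by $\delta$, and $\beta$-smoothness together with Cauchy--Schwarz upper-bounds the magnitude of the second by $\beta D\Vert\x_t-\x^*\Vert$. Hence $(\v-\x^*)^\top\nabla f(\x_t)\geq 0$ for every $\v\neq\x^*$, and so $\u_t=\x^*$, as soon as $\Vert\x_t-\x^*\Vert<\delta/(\beta D)$.

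Next I would, \emph{without} invoking quadratic growth, relate the primal gap $h_t:=f(\x_t)-f^*$ to the iterate distance. For any convex decomposition $\x_t=\sum_\v\lambda_\v\v$, Assumption~\ref{ass:strict} applied vertex-wise yields $(\x_t-\x^*)^\top\nabla f(\x^*)\geq\delta(1-\lambda_{\x^*})$, while $\Vert\x_t-\x^*\Vert\leq D(1-\lambda_{\x^*})$ by the triangle inequality; combined with the convex inequality $h_t\geq(\x_t-\x^*)^\top\nabla f(\x^*)$, this gives $\Vert\x_t-\x^*\Vert\leq Dh_t/\delta$. Chained with the previous step, once $h_t$ drops below a threshold of order $\delta^2/(\beta D^2)$ we are guaranteed $\u_t=\x^*$.

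To bound the iteration count I would invoke the classical convergence rate $h_t=O(\beta D^2/t)$ for Frank--Wolfe with line-search on $\beta$-smooth convex objectives (which requires neither SC nor QG). The main obstacle is to sharpen the resulting bound to the stated $O(\beta D^2/\delta)$: a naive combination of the threshold $\delta^2/(\beta D^2)$ with the $O(1/t)$ rate gives only $O(\beta^2 D^4/\delta^2)$ iterations. I expect the improvement to come from exploiting that whenever $\u_t\neq\x^*$, the two ingredients above force a uniform lower bound on the FW gap, namely $g_t:=(\x_t-\u_t)^\top\nabla f(\x_t)\geq h_t\geq\delta^2/(\beta D^2)$; integrating the per-step inequality $h_{t+1}\leq h_t-g_t^2/(2\beta D^2)$ with this enforced gap (rather than using the generic $O(\beta D^2/t)$ rate) should yield the claimed dependence on $\delta^{-1}$.
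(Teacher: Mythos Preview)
Your proposed sharpening in the last paragraph does not deliver the claimed $O(\beta D^2/\delta)$. Plugging the floor $g_t\geq c:=\delta^2/(\beta D^2)$ into $h_{t+1}\leq h_t-g_t^2/(2\beta D^2)$ gives a per-step decrease of only $c^2/(2\beta D^2)=\delta^4/(2\beta^3 D^6)$, which is \emph{worse} than what the $O(1/t)$ rate already provides; and using instead $g_t\geq h_t$ together with the floor $h_t\geq c$ yields linear convergence with rate $1-c/(2\beta D^2)=1-\delta^2/(2\beta^2 D^4)$, still requiring on the order of $\beta^2D^4/\delta^2$ iterations (up to a log factor). So the route through controlling $\Vert\x_t-\x^*\Vert$ and the oracle output is stuck at the wrong power of~$\delta$.

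The paper avoids this detour entirely by exploiting an inequality you already wrote down. Your bound $h_t\geq\delta(1-\lambda_{\x^*})$ is the whole proof once you observe that $\lambda_{\x^*}=0$ throughout: the only way $\x^*$ can enter the iterate's support is via the oracle, and the first time $\u_t=\x^*$ line-search immediately jumps to $\x_{t+1}=\x^*$; hence as long as $\x_T\neq\x^*$, the iterate lies in $\conv(\mV\setminus\{\x^*\})$ and therefore $h_T\geq\delta$. Combining this directly with the standard rate $h_T=O(\beta D^2/T)$ produces a contradiction after $T=O(\beta D^2/\delta)$ iterations. The missing idea is thus not a refined per-step recursion but the observation that strict complementarity gives a floor of $\delta$ (not $\delta^2/(\beta D^2)$) on the primal gap, so the generic $O(1/t)$ rate already suffices.
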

\begin{proof}
Suppose Algorithm \ref{alg:fw} runs for $T$ iterations and that the final iterate satisfies $\x_T \neq\x^*$. In particular it follows that $\x_T\in\textrm{conv}(\mV\setminus\{\x^*\})$. Thus, from the convexity of $f(\cdot)$ and Assumption \ref{ass:strict} it follows that
$f(\x_T)-f^* \geq (\x_T-\x^*)^{\top}\nabla{}f(\x^*) \geq \delta$.
However, from the standard convergence result for the Frank-Wolfe method (see for instance \cite{Jaggi13b}), it follows that after $T=O(\beta{}D^2/\delta)$ iterations,  $f(\x_T)-f^* < \delta$. Thus, we have arrived at a contradiction.
\end{proof}

We now turn to present and prove our main result. For this result we use the Frank-Wolfe variant with away-steps already suggested in \cite{GueLat1986} and revisited in \cite{Lacoste15} without further change. Only the analysis is new and based mostly on the ideas of \cite{GH16}.

\begin{algorithm}
\caption{Frank-Wolfe Algorithm with away-steps and line-search (see also \cite{GueLat1986, Lacoste15})}
\label{alg:awayfw}
\begin{algorithmic}[1]
\STATE $\x_1 \gets $ some arbitrary vertex in $\mV$
\FOR{$t=1,2\dots $}
\STATE let $\x_t = \sum_{i=1}^n\lambda_i\v_i$ be a convex decomposition of $\x_t$ to vertices in $\mV$, i.e., $\{\v_1,\dots,\v_n\}\subseteq\mV$, $(\lambda_1,\dots,\lambda_n)\in\mS_n$ and $\forall i\in[n]:\lambda_i>0$ \COMMENT{maintained explicitly throughout the run of the algorithm by tracking the vertices that enter and leave the decomposition}
\STATE $\u_t \gets \arg\min_{\v\in\mV}\v^{\top}\nabla{}f(\x_t)$, $i_t \gets \arg\max_{i\in[n]}\v_i^{\top}\nabla{}f(\x_t)$, $\z_t \gets \v_{i_t}$
\IF{$(\u_t-\x_t)^{\top}\nabla{}f(\x_t) < (\x_t-\z_t)^{\top}\nabla{}f(\x_t)$} 
\STATE $\w_t \gets \u_t-\x_t$, $\eta_{\max} \gets 1$  \COMMENT{FW direction}
\ELSE
\STATE $\w_t \gets \x_t - \z_t$, $\eta_{\max}\gets \lambda_{i_t}/(1-\lambda_{i_t})$ \COMMENT{away direction}
\ENDIF
\STATE $\x_{t+1} \gets \x_t+\eta_t\w_t$ where $\eta_t \gets\arg\min_{\eta\in[0,\eta_{\max}]}f(\x_t + \eta\w_t)$
\ENDFOR
\end{algorithmic}
\end{algorithm} 

\begin{theorem}\label{thm:main}[Main Theorem]
Let $\{\x_t\}_{t\geq 1}$ be the sequence of iterates produced by Algorithm \ref{alg:awayfw} and for all $t\geq 1$ denote $h_t = f(\x_t)-f^*$. Then,
{\small
\begin{align}\label{eq:awayfw:res:1}
\forall t\geq 1: \qquad h_t = O\big({\beta{}D^2/t}\big).
\end{align}}
Moreover, under Assumptions \ref{ass:QG}, \ref{ass:strict} and assuming $\dim\mF^* > 0$\footnote{If $\dim\mF^*=0$ then the unique optimal solution is a vertex and it is straightforward to show that the rate \eqref{eq:awayfw:res:1} implies the same finite convergence as in Theorem \ref{thm:1sparse}.},  there exists $T_0 = O(\beta{}D^2/(\delta^2\kappa))$, where $\kappa = O({\psi^*}^2\dim\mF^*/(\alpha{\xi^*}^2))$ and $\delta$ is as defined in Assumption \ref{ass:strict}, such that
{\small
\begin{align}\label{eq:awayfw:res:2}
\forall t\geq 2T_0: \qquad h_{t+1} \leq h_{T_0}\exp\Big({-\min\{\frac{1}{4},\frac{1}{\beta\kappa{}D^2}\}\frac{t-2T_0}{2}}\Big).
\end{align}}
Finally, under Assumptions \ref{ass:QG}, \ref{ass:strict} and assuming $\dim\mF^* > 0$,  there exists $T_1 = O\left({1+\beta{}D^2/(\delta^2\kappa)+(1+\beta\kappa{}D^2)\log(\kappa\beta{}D/\alpha)}\right)$, such that the iterates $\{\x_t\}_{t\geq T_1}$ all lie inside the optimal face $\mF^*$, and
{\small
\begin{align}\label{eq:awayfw:res:3}
\forall t\geq 2T_1: \qquad h_{t+1} \leq h_{T_1}\exp\Big({-\min\{\frac{1}{4},\frac{1}{\beta{}\kappa{}D_{\mF^*}^2}\}\frac{t-2T_1}{2}}\Big).
\end{align}}
\end{theorem}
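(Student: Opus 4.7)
The plan is to establish the three claims in sequence, using a common \emph{good step / drop step} ledger for Algorithm \ref{alg:awayfw}. I call iteration $t$ a good step if $\eta_t<\eta_{\max}$ (so the line search is interior), and a drop step otherwise (an away step with $\eta_t=\eta_{\max}$ that strictly removes one vertex from the active decomposition).

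For the sublinear rate \eqref{eq:awayfw:res:1}, I would combine $\beta$-smoothness with the algorithm's direction choice. Writing $g_t^{FW}:=(\x_t-\u_t)^{\top}\nabla f(\x_t)$ and $g_t^A:=(\z_t-\x_t)^{\top}\nabla f(\x_t)$, convexity gives $\max\{g_t^{FW},g_t^A\}\geq g_t^{FW}\geq h_t$, and on a good step the quadratic upper bound yields the decrement
\begin{align*}
h_{t+1} \leq h_t - \min\left\{h_t/2,\ \big(\max\{g_t^{FW},g_t^A\}\big)^2/(2\beta D^2)\right\}.
\end{align*}
On a drop step $f$ is non-increasing, and the cumulative number of drop steps is bounded by the preceding good steps (each dropped vertex was added by a prior FW step). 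Hence at least $\lceil t/2\rceil$ of the first $t$ iterations are good, which gives $h_t=O(\beta D^2/t)$.

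For \eqref{eq:awayfw:res:2} I would use strict complementarity to localize the Garber-Hazan argument of \cite{GH16} to the optimal face. Writing $\x_t=\sum_i\lambda_i\v_i$ with $\lambda_i>0$ and splitting $I^+:=\{i:\v_i\in\mF^*\}$, $I^-:=\{i:\v_i\notin\mF^*\}$, convexity combined with Assumption \ref{ass:strict} gives
\begin{align*}
h_t \geq (\x_t-\x^*)^{\top}\nabla f(\x^*) = \sum_{i\in I^-}\lambda_i(\v_i-\x^*)^{\top}\nabla f(\x^*) \geq \delta\sum_{i\in I^-}\lambda_i,
\end{align*}
so $\sum_{i\in I^-}\lambda_i\leq h_t/\delta$. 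I would then decompose $\x_t$ as a convex combination of vertices in $\mV\cap\mF^*$ plus an out-of-face residual of total weight $O(h_t/\delta)$, and carry out the Hoffman-type construction of \cite{GH16} restricted to $\mF^*$, so that the free dimension is $\dim\mF^*$ and the polytope-geometry constants are $\psi^*,\xi^*$. The result is a lower bound of the form $\max\{g_t^{FW},g_t^A\}\geq c_1\sqrt{\alpha h_t/\kappa}$ minus a residual of order $h_t$ scaled by polynomial factors in $\beta,D,1/\delta$. Once $h_t$ falls below the threshold $\Theta(\delta^2\kappa)$, equivalently once $t\geq T_0=O(\beta D^2/(\delta^2\kappa))$ by the Stage 1 rate, the residual is dominated, the decrement inequality becomes $h_{t+1}\leq (1-\gamma)h_t$ with $\gamma=\min\{1/4,1/(\beta\kappa D^2)\}$ on good steps, and the good-vs-drop amortization from Stage 1 yields \eqref{eq:awayfw:res:2}.

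For \eqref{eq:awayfw:res:3} I would argue that the iterates must eventually stay inside $\mF^*$. Once $\Vert\nabla f(\x_t)-\nabla f(\x^*)\Vert$ is small enough (guaranteed by smoothness plus \eqref{eq:awayfw:res:2}), strict complementarity forces $\u_t\in\mV\cap\mF^*$, so no new $I^-$-vertex is ever introduced, and for any $i\in I^-$ the algorithm selects $\z_t=\v_i$ since $\v_i^{\top}\nabla f(\x_t)$ strictly exceeds ${\v'}^{\top}\nabla f(\x_t)$ for every $\v'\in\mV\cap\mF^*$. Because $\sum_{i\in I^-}\lambda_i\leq h_t/\delta$ decays geometrically, within every $O(1+\beta\kappa D^2)$ iterations an away step on some $I^-$-vertex must force $\eta_t=\eta_{\max}$ and drop that vertex; finitely many such drops purge $I^-$ entirely. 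Summing the pre-linear regime, the wait for sufficient gradient proximity, and the drop phase yields $T_1=O\big(1+\beta D^2/(\delta^2\kappa)+(1+\beta\kappa D^2)\log(\kappa\beta D/\alpha)\big)$. From $T_1$ onward the algorithm runs entirely inside $\mF^*$, so replaying the Stage 2 argument with $D$ replaced by $D_{\mF^*}$ gives \eqref{eq:awayfw:res:3}.

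The main obstacle is the geometric inequality underlying Stage 2: adapting the Hoffman-type lower bound of \cite{GH16} so the dimension factor is $\dim\mF^*$ rather than the ambient $d$. This requires projecting $\x_t-\x^*$ onto $\textrm{aff}(\mF^*)$ and using the mass bound $\sum_{i\in I^-}\lambda_i\leq h_t/\delta$ to show that the out-of-face residual contributes only a lower-order error that is absorbed once $h_t\leq\Theta(\delta^2\kappa)$. Stage 3 is then a comparatively clean consequence of Stage 2 together with continuity of the gradient.
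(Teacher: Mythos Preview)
Your Stage~1 and Stage~2 match the paper closely. The sublinear rate via good/drop amortization is exactly Lemma~\ref{lem:awayfw:std} plus Observation~\ref{obs:drop}, and your localized Hoffman argument is precisely Lemma~\ref{lem:simplexDist}: split the support into $I^+$ and $I^-$, bound $\sum_{I^-}\lambda_i\le h_t/\delta$ via strict complementarity, and run the \cite{GH16} row-selection bound only over the at most $\dim\mF^*$ active constraints of $\mF^*$ to control $\sum_{I^+}\Delta_i$. The paper then packages this as an upper bound on the total mass shift $\Delta^{(t)}$ rather than your ``$c_1\sqrt{h_t/\kappa}$ minus residual'' form, but the two are equivalent once $\sqrt{h_t}\le\delta\sqrt{\tilde\kappa}$, which is exactly how $T_0$ is chosen.

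Stage~3 has a genuine gap. You argue that because $\sum_{I^-}\lambda_i\le h_t/\delta$ decays geometrically, ``within every $O(1+\beta\kappa D^2)$ iterations an away step must force $\eta_t=\eta_{\max}$.'' But a shrinking upper bound on the total out-of-face mass does not by itself force any individual $\lambda_i$ to hit zero in finitely many steps: on a non-drop away step from $\z_t=\v_{i_t}$, the weight $\lambda_{i_t}$ decreases while every other $\lambda_j$ is multiplied by $(1+\eta_t)>1$, so the weights can cycle indefinitely with none reaching zero even as their sum tends to zero. Moreover, even granting a drop every $O(\beta\kappa D^2)$ iterations, purging up to $\tilde T_1$ out-of-face vertices would take $O(\tilde T_1\cdot\beta\kappa D^2)$ steps, which does not recover the stated $T_1=O(\tilde T_1)$.

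The paper closes this gap by a direct first-order argument: once $h_t$ is small enough, \emph{every} away step on an $I^-$-vertex is a drop. If it were not ($\eta_t<\eta_{\max}$), the line search would sit at an interior minimum, giving $(\z_t-\x_t)^\top\nabla f(\x_{t+1})\le 0$. But expanding around $\x_{t+1}^*$ and invoking strict complementarity (since $\z_t\in\mV\setminus\mF^*$) together with smoothness and quadratic growth shows $(\z_t-\x_t)^\top\nabla f(\x_{t+1})\geq \delta - O(\beta D\sqrt{h_t/\alpha})>0$, a contradiction. Hence from $\tilde T_1$ onward each iteration with $I^-\neq\emptyset$ removes one out-of-face vertex, and $T_1=2\tilde T_1$ suffices.
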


Note that the linear rates in \eqref{eq:awayfw:res:2}, \eqref{eq:awayfw:res:3} depend explicitly only on the dimension of the optimal face $\dim\mF^*$ (through the parameter $\kappa$), but not on the ambient dimension $d$.  That is, treating all other quantities as constants, the linear rate is of the form $\exp(-\Theta(t/\dim\mF^*))$ and not $\exp(-\Theta(t/d))$ as in the previous works \cite{Garber13, GH16, Lacoste15}.
Also,  the rate in \eqref{eq:awayfw:res:3} depends only on the diameter of the optimal face and not on that of the entire polytope. Such improved dependence can be significant since for many polytopes, the diameter of a face scales with its dimension (e.g., the hypercube $[0,1]^d$). Finally, we note that none of the parameters in Theorem \ref{thm:main} are required to run Algorithm \ref{alg:awayfw} due to the use of line-search.

Before proving the theorem we will need a simple observation and two lemmas. Lemma \ref{lem:simplexDist} is the main technical novel ingredient and improves upon its counterpart in \cite{GH16} by replacing the explicit dependence on the dimension $d$ with dependence only on $\dim\mF^*$ and an additional (typically) lower-order term which depends on the strict complementarity parameter $\delta$.
 
Following the terminology of \cite{Lacoste15} we refer to each step $t$ of Algorithm \ref{alg:awayfw} on which the away direction was chosen and also $\eta_t = \eta_{\max}$ as a \textit{drop step}, since in such a case one of the vertices in the decomposition of the current iterate $\x_t$ is removed from the decomposition. We denote by $T_{drop}$ the number of iterations up to (and including) iteration $T$ that are drop steps. The following simple observation is highly important for the analysis of Algorithm \ref{alg:awayfw} and was made in \cite{Lacoste15}.

\begin{observation}\label{obs:drop}
Let $\x\in\mP$ be given by an explicit convex combination of $k$ vertices and suppose that starting with the point $\x$, $T$ iterations of Algorithm \ref{alg:awayfw} have been executed. Then, on these $T$ iterations it holds that $T_{drop} \leq (k+T)/2$.
\end{observation}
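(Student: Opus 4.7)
The plan is to track $n_t$, the number of vertices appearing with positive coefficient in the explicit convex decomposition of $\x_t$ maintained by Algorithm \ref{alg:awayfw}, and to bound $T_{drop}$ via a simple accounting argument. By hypothesis $n_1 = k$, and since the coefficients in the decomposition of $\x_{T+1}$ sum to $1$, we always have $n_{T+1} \geq 1$. I will show that $n_{T+1} \leq k + T_{FW} - T_{drop}$, where $T_{FW}$ denotes the number of iterations among $1,\dots,T$ on which the FW direction was chosen; combined with $T_{FW} + T_{drop} \leq T$ (because drop steps are a subset of away-direction steps), this immediately yields $2T_{drop} \leq k + T$ and hence the claim.

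The heart of the argument is a case analysis of $\Delta n_t := n_{t+1} - n_t$. On a FW step, $\x_{t+1} = (1-\eta_t)\x_t + \eta_t\u_t$, so the only vertex that can enter the support is $\u_t$; thus $\Delta n_t \leq +1$ (it may be strictly less than $+1$, e.g.\ when $\eta_t = 1$ and the old support had more than one element, but this only strengthens the inequality). On an away step, $\x_{t+1} = (1+\eta_t)\x_t - \eta_t\v_{i_t}$, giving new coefficients $(1+\eta_t)\lambda_j$ for $j \neq i_t$ and $(1+\eta_t)\lambda_{i_t} - \eta_t$ for $j = i_t$. For $\eta_t < \eta_{\max} = \lambda_{i_t}/(1-\lambda_{i_t})$ the latter is strictly positive, so $\Delta n_t = 0$; for $\eta_t = \eta_{\max}$ (a drop step) the latter equals $0$ and $\v_{i_t}$ leaves the support, so $\Delta n_t = -1$. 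No new vertex enters the support on any away step.

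Summing these inequalities over $t=1,\dots,T$ gives $n_{T+1} - k = \sum_{t=1}^{T}\Delta n_t \leq T_{FW}\cdot 1 + (T_{away}-T_{drop})\cdot 0 + T_{drop}\cdot(-1) = T_{FW} - T_{drop}$. Using $n_{T+1}\geq 1$ this yields $T_{drop} \leq k - 1 + T_{FW}$, and since $T_{FW} = T - T_{away} \leq T - T_{drop}$ we get $2T_{drop} \leq k - 1 + T \leq k + T$, so $T_{drop} \leq (k+T)/2$ as desired.

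I don't foresee any real obstacle: the entire proof is an exercise in tallying how a single iteration changes the support size of the maintained convex decomposition. The only point that requires a touch of care is that a FW step with $\eta_t = 1$ can drop many vertices at once, but the bound $\Delta n_t \leq +1$ still holds, so this only makes the inequality slacker, not tighter in the wrong direction.
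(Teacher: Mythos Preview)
Your argument is correct. The paper itself does not supply a proof of this observation; it merely states it and attributes it to \cite{Lacoste15}. Your support-size accounting (each FW step increases the support by at most one, each drop step decreases it by exactly one, non-drop away steps leave it unchanged, and the support always has size at least one) is precisely the standard argument from that reference, and in fact you obtain the slightly sharper bound $T_{drop} \leq (k-1+T)/2$.
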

\begin{lemma}\label{lem:awayfw:std}
Algorithm \ref{alg:awayfw} satisfies that for all $t\geq 1$, $f(\x_t) -f^* = O(\beta{}D^2/t)$.
\end{lemma}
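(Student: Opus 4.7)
The plan is to follow the standard template for analyzing away-step Frank-Wolfe: lower-bound per-iteration progress on non-drop iterations, observe that drop iterations never increase $h_t$, use Observation~\ref{obs:drop} to lower-bound the number of non-drop iterations, and then translate the resulting recurrence into the $O(\beta D^2/t)$ rate via the usual two-phase argument.

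For a single non-drop iteration I would first note that the chosen direction $\w_t$ satisfies $g_t:=-\w_t^{\top}\nabla f(\x_t)\geq h_t$, since the algorithm picks whichever of the FW and away directions has the larger gap and the FW gap is already $(\x_t-\u_t)^{\top}\nabla f(\x_t)\geq (\x_t-\x^*)^{\top}\nabla f(\x_t)\geq h_t$ by convexity; also $\|\w_t\|\leq D$. By $\beta$-smoothness, for every $\eta\in[0,\eta_{\max}]$,
\[
f(\x_t+\eta\w_t)\;\leq\; f(\x_t) - \eta g_t + \tfrac{\eta^2\beta D^2}{2},
\]
and the line-search in Step~9 upper-bounds $f(\x_{t+1})$ by the right-hand side. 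Choosing $\eta=\min(\eta_{\max},\,g_t/(\beta D^2))$ and splitting on which factor attains the min yields
\[
h_{t+1}\;\leq\; h_t - \min\!\Big(\tfrac{h_t^2}{2\beta D^2},\;\tfrac{h_t}{2}\Big),
\]
where the $h_t/2$ branch only needs to cover FW iterations (there $\eta_{\max}=1$, so the ``long-step'' case $g_t>\beta D^2$ gives progress $\geq g_t-\beta D^2/2\geq g_t/2\geq h_t/2$); on non-drop away iterations the line-search optimum $\eta_t$ lies in the interior of $[0,\eta_{\max}]$ and so, by convexity of $\phi(\eta):=f(\x_t+\eta\w_t)$, equals the unconstrained minimizer of $\phi$, hence the smoothness upper bound evaluated at $\eta=g_t/(\beta\|\w_t\|^2)$ is a valid upper bound on $f(\x_{t+1})$ regardless of whether that $\eta$ exceeds $\eta_{\max}$, and directly produces the $h_t^2/(2\beta D^2)$ branch.

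On drop iterations the bound $h_{t+1}\leq h_t$ is immediate, since $g_t>0$ means an infinitesimal step in direction $\w_t$ strictly decreases $f$. Because $\x_1\in\mV$, Observation~\ref{obs:drop} applied with $k=1$ gives at most $(T+1)/2$ drop iterations among the first $T$, hence at least $T_{\mathrm{good}}\geq (T-1)/2$ non-drop iterations. Finally I would close the argument by the standard two-regime calculation: while $h_t\geq \beta D^2$ each non-drop step halves $h_t$, so after $O(\log(h_1/(\beta D^2)))$ non-drop steps we enter the regime $h_t<\beta D^2$, where the recurrence $h_{t+1}\leq h_t-h_t^2/(2\beta D^2)$ yields $1/h_{t+1}\geq 1/h_t+1/(2\beta D^2)$; accumulating this inverse-increase over the remaining non-drop iterations, together with $T_{\mathrm{good}}\geq (T-1)/2$, gives $h_T=O(\beta D^2/T)$. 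The only genuinely subtle step is justifying the $h_t^2/(2\beta D^2)$ progress on non-drop away iterations where $\eta_{\max}$ may be tiny, which is handled by the interior-minimum argument above; the remainder is bookkeeping.
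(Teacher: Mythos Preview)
Your proof is correct and follows essentially the same approach as the paper: both hinge on the observation that on a non-drop away iteration the line-search attains the unconstrained minimum of $\eta\mapsto f(\x_t+\eta\w_t)$ (your ``interior-minimum argument''), which lets the per-step decrease match that of vanilla Frank-Wolfe, and then invoke Observation~\ref{obs:drop} to control the number of drop steps. The only cosmetic difference is that the paper stops after establishing the FW-type inequality $f(\x_{t+1})\leq f(\x_t)+\eta(\u_t-\x_t)^{\top}\nabla f(\x_t)+\eta^2\beta D^2/2$ for all $\eta\in[0,1]$ and cites the standard $O(\beta D^2/t)$ rate, whereas you spell out the two-phase recurrence explicitly.
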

\begin{proof}
Fix some iteration $t$ of Algorithm \ref{alg:awayfw} on which the away direction was chosen but it is not a drop step (i.e., $\eta_t < \eta_{\max}$). Due to the use of line-search and the convexity of $f(\cdot)$ it in particular follows that 
$f(\x_t + \eta_t\w_t) =\arg\min_{\eta\geq 0}f(\x_t+\eta\w_t)$.
Thus, we have that on such iteration,
{\small
\begin{align*}
\forall \eta\in[0,1]: \quad f(\x_{t+1}) &= f(\x_t + \eta_t\w_t) \leq f(\x_t + \eta\w_t) \underset{(a)}{\leq} f(\x_t) + \eta\w_t^{\top}\nabla{}f(\x_t) + \eta^2\beta\Vert{\w_t}\Vert^2/2 \\
&\underset{(b)}{\leq} f(\x_t) + \eta(\u_t-\x_t)^{\top}\nabla{}f(\x_t) + \eta^2\beta{}D^2/2,
\end{align*}}
where (a) follows from the smoothness of $f(\cdot)$ and (b) follows  since the away direction was chosen and not the FW direction. The above bound is the standard error-reduction analysis for the standard Frank-Wolfe method with line-search (Algorithm \ref{alg:fw}). Thus, we have that any iteration of Algorithm \ref{alg:awayfw}, which is not a drop step, reduces the error by at least the amount the Frank-Wolfe method with line-search reduces in worst-case. Since drop steps do not increase the function value, the lemma follows directly from the convergence rate of the standard Frank-Wolfe method (i.e., $O(\beta{}D^2/t)$, see for instance \cite{Jaggi13b}) and Observation \ref{obs:drop}.
\end{proof}
\begin{lemma}\label{lem:simplexDist}
Let $\x\in\mP$ and write $\x$ as a convex combination of points in $\mV$, i.e., $\x = \sum_{i=1}^n\lambda_i\v_i$ such that $\lambda_i > 0$ for all $i\in[n]$. Let $\x^*\in\mX^*$ be the optimal solution closet in Euclidean distance to $\x$. Then, $\x^*$ can be written as a convex combination $\x^*= \sum_{i\in[n]}(\lambda_i-\Delta_i)\v_i+ \sum_{i\in[n]}\Delta_i\z$, for some $\z\in\mF^*$, $\Delta_i\in[0,\lambda_i]$, and $\sum_{i\in[n]}\Delta_i \leq \min\big\{1,\delta^{-1}\left({f(\x)-f^*}\right) + \frac{\sqrt{2\dim\mF^*}\psi^*}{\xi^*\sqrt{\alpha}}\sqrt{f(\x)-f^*}\big\}$.
\end{lemma}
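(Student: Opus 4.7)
The plan is to use strict complementarity (Assumption~\ref{ass:strict}) to cleanly separate the decomposition of $\x$ into an ``inside-$\mF^*$'' piece and an ``outside-$\mF^*$'' piece, and then to run a Hoffman-style argument in the spirit of \cite{GH16} restricted to the sub-polytope $\mF^*$, whose effective ambient dimension is $\dim\mF^*$ rather than $d$.

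First, I would partition the indices as $I_{\text{in}}:=\{i:\v_i\in\mF^*\}$ and $I_{\text{out}}:=\{i:\v_i\in\mV\setminus\mF^*\}$, and set $\mu:=\sum_{i\in I_{\text{out}}}\lambda_i$. Convexity of $f$ together with Assumption~\ref{ass:strict} give
\[
f(\x)-f^*\;\geq\;(\x-\x^*)^{\top}\nabla f(\x^*)\;=\;\sum_i\lambda_i(\v_i-\x^*)^{\top}\nabla f(\x^*)\;\geq\;\mu\,\delta,
\]
so $\mu\leq\delta^{-1}(f(\x)-f^*)$. Since $\x^*\in\mF^*$ and the promised $\z\in\mF^*$ but $\v_i\notin\mF^*$ for every $i\in I_{\text{out}}$, any valid decomposition of the stated form must take $\Delta_i=\lambda_i$ on all of $I_{\text{out}}$, contributing exactly $\mu$ to $\sum\Delta_i$; the degenerate case in which the naive bound of $1$ is binding (in particular $\mu=1$) is dispatched immediately.

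For $\mu<1$, I would run a GH16-style Hoffman argument inside $\mF^*$. For each row $\a:=\A_2^*(i)$ of $\A_2^*$ that is tight at $\x^*$, the identity
\[
\a^{\top}(\x^*-\x)\;=\;\sum_j\lambda_j\bigl(\b_2^*(i)-\a^{\top}\v_j\bigr)
\]
has non-negative summands; dropping the non-negative $I_{\text{out}}$ contribution on the right, using $\b_2^*(i)-\a^{\top}\v_j\geq\xi^*$ (by definition of $\xi^*$) for every $\v_j\in\mV\cap\mF^*$ with strict slack on $\a$, and bounding the left-hand side by $\|\a\|\cdot\|\x-\x^*\|$, yields per-constraint $\sum_{j\in I_{\text{in}},\,\a^{\top}\v_j<\b_2^*(i)}\lambda_j\leq\|\a\|\cdot\|\x-\x^*\|/\xi^*$. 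Aggregating via Cauchy--Schwarz over a linearly independent subset of at most $\dim\mF^*$ rows of $\A_2^*$ (exactly as in \cite{GH16}) produces the key estimate
\[
\sum_{j\in I_{\text{in}},\,\text{bad}}\lambda_j\;\leq\;\frac{\psi^*\sqrt{\dim\mF^*}}{\xi^*}\,\|\x-\x^*\|,
\]
where a ``bad'' in-vertex is one with strict slack on at least one $\mF^*$-tight row of $\A_2^*$. Setting $\Delta_j$ to remove exactly this bad in-mass and choosing the common replacement $\z\in\mF^*$ via the algorithmic decomposition construction of \cite{GH16} adapted to $\mF^*$ gives a valid decomposition with $\sum\Delta_i\leq\mu+\frac{\psi^*\sqrt{\dim\mF^*}}{\xi^*}\|\x-\x^*\|$.

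Combining with $\|\x-\x^*\|\leq\sqrt{2\alpha^{-1}(f(\x)-f^*)}$ from Assumption~\ref{ass:QG} and with the bound $\mu\leq\delta^{-1}(f(\x)-f^*)$ yields exactly the claimed inequality. The main obstacle I foresee is the explicit construction of $\z\in\mF^*$: one must verify that after removing precisely the flagged bad-in-vertex mass together with the forced $I_{\text{out}}$ removal, the leftover mass can be rebalanced as a convex combination centred at a single replacement point lying inside $\mF^*$. This is exactly the algorithmic Hoffman-flow construction of \cite{GH16} transplanted to the sub-polytope $\mF^*$ using the constraints $\A_2^*$, and it is the one place where the Hoffman-style linearly-independent-row selection must be chosen to cohere with the convex geometry of $\mF^*$ so that the clean $\sqrt{\dim\mF^*}$ factor (rather than a union-bound-style $\dim\mF^*$) survives in the final bound.
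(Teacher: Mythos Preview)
Your split into $I_{\text{in}}/I_{\text{out}}$ and the strict-complementarity bound on $\mu$ match the paper exactly. The gap is in the in-face part: you anchor the Hoffman argument at rows of $\A_2^*$ that are tight at $\x^*$, and define ``bad'' in-vertices accordingly. But the quantity you must control is $\sum_{j\in I_{\text{in}}}\Delta_j$ for a \emph{specific valid} decomposition $\x^*=\sum_j(\lambda_j-\Delta_j)\v_j+(\sum_j\Delta_j)\z$ with $\z\in\mF^*$, and your bad-mass bound does not control any such $\sum\Delta_j$. Concretely: take $\mP=[0,1]^2$, $\mF^*=[0,1]\times\{0\}$, $\x^*=(1/2,0)$ (in the relative interior of $\mF^*$, so \emph{no} row of $\A_2^*$ is tight at $\x^*$ and your bad set is empty), a single in-vertex $\v_1=(0,0)$ with $\lambda_1=0.8$, and one out-vertex $\v_2=(0,1)$ with $\lambda_2=0.2$. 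Your recipe sets $\Delta_1=0$, $\Delta_2=0.2$, and forces $\z=(\x^*-0.8\v_1)/0.2=(2.5,0)\notin\mP$. You cannot ``choose $\z$ via \cite{GH16} afterward'' because that construction outputs $\z$ and the $\Delta_j$'s \emph{jointly}; its $\Delta_j$'s will in general differ from your bad-mass prescription.

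The paper resolves this by reversing the order: it first applies Lemma~5.3 of \cite{GH16} in the full polytope $\mP$ (not in $\mF^*$) to obtain $\z$ and $\{\Delta_i\}$ simultaneously, with the property that every $i$ with $\Delta_i>0$ has a constraint of $\mP$ tight at $\z$ and slack at $\v_i$. The face property of $\mF^*$ then forces $\z\in\mF^*$ for free. Now for $i\in I_{\text{in}}$ the witnessing constraint, being tight at $\z\in\mF^*$ and strictly slack at $\v_i\in\mF^*$, must be a row of $\A_2^*$; a minimal set $C^*(\z)$ of such rows covering all $i\in I_{\text{in}}$ with $\Delta_i>0$ has size at most $\dim\mF^*$ by a linear-independence argument. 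Writing $\x^*-\x=\sum_i\Delta_i(\z-\v_i)$ and using $\|\x^*-\x\|^2\ge\psi^{*-2}\|\A_{2,\z}(\x^*-\x)\|^2$ together with the covering property gives $\sum_{i\in I_{\text{in}}}\Delta_i\le\frac{\psi^*\sqrt{\dim\mF^*}}{\xi^*}\|\x-\x^*\|$ directly. The relevant constraints are those tight at $\z$, not at $\x^*$; this is precisely the obstacle you flagged, and it is resolved not by tailoring the construction of $\z$ to pre-chosen $\Delta_j$'s but by letting \cite{GH16} hand you $\z$ first and reading the constraints off of it.
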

\begin{proof}
Let us write $\x$ as $\x=\sum_{i\in{}S_1}\lambda_i\v_i + \sum_{j\in{}S_2}\lambda_j\v_j$, where $S_1 = \{i\in[n] : \v_i\in{}\mF^*\}$ and $S_2 = [n]\setminus{}S_1$. Since $\x^*\in\mF^*$, clearly it must hold that for all $i\in{}S_2$, $\Delta_i = \lambda_i$, and $\z\in\mF^*$. 

We begin by upper-bounding $\sum_{i\in{}S_2}\Delta_i$. From the convexity of $f(\cdot)$ it holds that
{\small
\begin{align*}
f(\x) - f(\x^*) \geq (\x-\x^*)^{\top}\nabla{}f(\x^*) &= \sum_{i\in{}S_1}\lambda_i(\v_i-\x^*)^{\top}\nabla{}f(\x^*) + \sum_{i\in{}S_2}\lambda_i(\v_i-\x^*)^{\top}\nabla{}f(\x^*)
\\&\underset{(a)}{\geq}  \sum_{i\in{}S_2}\lambda_i(\v_i-\x^*)^{\top}\nabla{}f(\x^*) \underset{(b)}{\geq} \sum_{i\in{}S_2}\lambda_i\delta,
\end{align*}}
where (a) follows from the optimality of $\x^*$ and (b) follows from the strict complementarity assumption (Assumption \ref{ass:strict}). Since for all $i\in{}S_2$ we have $\Delta_i=\lambda_i$ we obtain the bound $\sum_{i\in{}S_2}\Delta_i\leq\delta^{-1}(f(\x)-f^*)$.

We now turn to upper-bound $\sum_{i\in{}S_1}\Delta_i$. For this we use a refinement of the argument introduced in \cite{GH16}.  Applying Lemma 5.3 from \cite{GH16} we have that there is alway a choice for $\{\Delta_i\}_{i\in[n]}$ and $\z$ such that for all $i\in[n]$, if $\Delta_i > 0$ then there must exist an index $j_i$ such that $\A_2(j_i)^{\top}\z = \b_2(j_i)$ and $\A_2(j_i)^{\top}\v_i < \b_2(j_i)$. Let $C^*(\z)\subseteq[m_2]$ be a set of minimal cardinality such that i) for all $j\in{}C^*(\z)$, $\A_2(j)^{\top}\z=\b_2(j)$, and ii) for all $i\in{}S_1$ with $\Delta_i>0$ there exist $j_i\in{}C^*(\z)$ for which $\A_2(j_i)^{\top}\v_i < \b_2(j_i)$. The minimal cardinality implies that the latter requirement cannot hold for any subset of $C^*(\z)$. Thus, it must hold that $\vert{C^*(\z)}\vert \leq \dim\mF^*$. This is true since otherwise, by definition of $\dim\mF^*$, there must exist some $j\in{}C^*(\z)$ such that $\A_2(j)$ can be written as a linear combination of rows in $\A_1^*$ (which correspond to constraints that are satisfied by $\z$ and $\{\v_i\}_{i\in{}S_1}$ since they are in $\mF^*$) and the rows of $\A_2$ indexed in $C^*(\z)\setminus\{j\}$. However, this means that $j$ is redundant in $C^*(\z)$, since if some $\v_i$ with $\Delta_i>0$ satisfies all constraints indexed in  $C^*(\z)\setminus\{j\}$, then it must also satisfy $\A_2(j)^{\top}\v_i = \b_2(j)$, which contradicts the minimality of $C^*(\z)$.

Let $\A_{2,\z}\in\reals^{|C^*(\z)|\times d}$ be the matrix $\A_2$ after deleting each row $j\notin{}C^*(\z)$. It holds that
{\small
\begin{align*}
\Vert{\x^*-\x}\Vert^2 &\geq \frac{1}{\Vert{\A_{2,\z}}\Vert^2}\Vert{\A_{2,\z}(\x^*-\x)}\Vert^2 = \frac{1}{\Vert{\A_{2,\z}}\Vert^2}\Vert{\A_{2,\z}\sum_{i\in[n]}\Delta_i(\z-\v_i)}\Vert^2 \\
&\underset{(a)}{\geq} \psi^{*-2}\hspace{-6pt}\sum_{j\in{}C^*(\z)}\hspace{-5pt}\Big({\sum_{i\in[n]}\Delta_i(\b_2(j)-\A_{2,\z}(j)^{\top}\v_i)}\Big)^2\geq \psi^{*-2}\hspace{-6pt}\sum_{j\in{}C^*(\z)}\hspace{-5pt}\Big({\sum_{i\in{}S_1}\Delta_i(\b_2(j)-\A_{2,\z}(j)^{\top}\v_i)}\Big)^2\\
&\geq \frac{1}{\psi^{*2}|C^*(\z)|}\Big({\hspace{-3pt}\sum_{j\in{}C^*_0(\z)}\sum_{i\in{}S_1}\Delta_i(\b_2(j)-\A_{2,\z}(j)^{\top}\v_i)}\Big)^2\\
&\underset{(b)}{\geq} \frac{1}{{\psi^*}^2|C^*(\z)|}\big({\sum_{i\in{}S_1}\Delta_i\xi^*}\big)^2 \geq \frac{{\xi^*}^2}{{\psi^*}^2\dim\mF^*}\big({\sum_{i\in{}S_1}\Delta_i}\big)^2,
\end{align*}}
where (a) holds since the rows of $\A_2$ indexed in $C^*(\z)$ are also rows of $\A_2^*$ and by the definition of $\psi^*$, and (b) 
follows from the definition of $C^*(\z)$ and $\xi^*$.

Finally, using the quadratic growth of $f(\cdot)$ we have that $\sum_{i\in{}S_1}\Delta_i\leq \frac{\sqrt{2\dim\mF^*}\psi^*}{\xi^*\sqrt{\alpha}}\sqrt{f(\x)-f^*}$.
\end{proof}

\begin{proof}[Proof of Theorem \ref{thm:main}]
Result \eqref{eq:awayfw:res:1} follows immediately from Lemma \ref{lem:awayfw:std}. From this result it also follows that for some $T_0 = O(\beta{}D^2/(\delta^2\kappa))$ it holds that for all $t \geq T_0$, $\sqrt{h_t} \leq \delta\sqrt{\tilde{\kappa}}$, for $\tilde{\kappa} := 2\dim\mF^*{\psi^*}^2/({\xi^*}^2\alpha)$. Throughout the rest of the proof,  for every iteration $t$ we let $\x_t^*$ denote the point in $\mX^*$ closest in Euclidean distance to the iterate $\x_t$.

Consider now some iteration $t\geq T_0$ and write the convex decomposition of $\x_t$ as $\x_t = \sum_{i=1}^n\lambda_i\v_i$, $\{\v_i\}_{\in[n]}\subseteq\mV$. Suppose without loss of generality that $\v_1,\dots,\v_n$ are ordered such that  $\v_1^{\top}\nabla{}f(\x_t) \geq \v_2^{\top}\nabla{}f(\x_t)\geq \dots \geq \v_n^{\top}\nabla{}f(\x_t)$. Let $\Delta^{(t)}=\sum_{i=1}^n\Delta_i$ be the bound in Lemma \ref{lem:simplexDist} when applied w.r.t. the point $\x_t$. Let $n_0$ be the smallest integer such that $\sum_{i=1}^{n_0}\lambda_i \geq \Delta^{(t)}$ and consider the point
$\p_t = \big({\lambda_{n_0} - \big({\Delta^{(t)} - \sum_{j=1}^{n_0-1}\lambda_j}\big)}\big)\v_{n_0} + \sum_{i=n_0+1}^n\lambda_i\v_i + \Delta^{(t)}\u_t$, where $\u_t$ is the output of the linear optimization oracle in Algorithm \ref{alg:awayfw}.
Since $\p_t$ is obtained by replacing vertices in the decomposition of $\x_t$ with highest inner product with $\nabla{}f(\x_t)$, with the point $\u_t$ that minimizes the inner-product among all points in $\mP$, overall shifting the distribution mass which corresponds to the bound in Lemma \ref{lem:simplexDist} , we have that (see also Lemma 5.6 in \cite{GH16})
$(\p_t-\x_t)^{\top}\nabla{}f(\x_t) \leq (\x_t^*-\x_t)^{\top}\nabla{}f(\x_t)$.
On the other hand, taking $\Delta_i = \lambda_i$ for all $1\leq i < n_0$, $\Delta_{n_0} = \Delta^{(t)}-\sum_{j=1}^{n_0-1}\Delta_j$, and $\Delta_i =0$ for all $n_0+1\leq i\leq n$, we have that 
{\small
\begin{align*}
(\p_t-\x_t)^{\top}\nabla{}f(\x_t) &= \sum_{i=1}^n\Delta_i(\u_t-\v_i)^{\top}\nabla{}f(\x_t) \underset{(a)}{\geq} \sum_{i=1}^n\Delta_i(\u_t-\z_t)^{\top}\nabla{}f(\x_t) \\
&= \Delta^{(t)}(\u_t - \x_t)^{\top}\nabla{}f(\x_t) + \Delta^{(t)}(\x_t - \z_t)^{\top}\nabla{}f(\x_t) \underset{(b)}{\geq} 2\Delta^{(t)}\w_t^{\top}\nabla{}f(\x_t),
\end{align*}}
where (a) and (b) follow from the definitions of $\z_t,\w_t$ in Algorithm \ref{alg:awayfw}.
Thus, we have that
 $\Delta^{(t)}\w_t^{\top}\nabla{}f(\x_t) \leq \frac{1}{2}(\x_t^*-\x_t)^{\top}\nabla{}f(\x_t) \leq -\frac{1}{2}h_t$,
where the last inequality follows from the convexity of $f(\cdot)$. In particular, it follows that for any $\rho > 0$, whenever $\rho\Delta^{(t)} \leq 1$ and either the FW direction was chosen or the away direction was chosen  with $\eta_t < \eta_{\max}$ (i.e., not a drop step) that,
{\small
\begin{align}\label{eq:mainproof:2}
f(\x_{t+1}) &= f(\x_t + \eta_t\w_t) \underset{(a)}{=} {\arg\min}_{\eta\in[0,1]}f(\x_t + \eta\w_t) \leq f(\x_t + \rho\Delta^{(t)}\w_t) \nonumber \\
&\underset{(b)}{\leq} f(\x_t) + \rho\Delta^{(t)}\w_t^{\top}\nabla{}f(\x_t) + \frac{\rho^2{\Delta^{(t)}}^2\beta{}\Vert{\w_t}\Vert^2}{2}\underset{(c)}{\leq} f(\x_t) - \frac{\rho}{2}h_t + 2\rho^2\beta{}D^2\tilde{\kappa}h_t,
\end{align}}
where (a) follows from the use of line-search and the convexity of $f(\cdot)$, (b) follows from the smoothness of $f(\cdot)$, and (c) follows from plugging the upper-bound on $\Delta^{(t)}\w_t^{\top}\nabla{}f(\x_t)$, the bound on $\Delta^{(t)}$ from Lemma \ref{lem:simplexDist} (note $\sqrt{h_t}\leq \delta\sqrt{\tilde{\kappa}}$ for all $t\geq T_0$, and thus $\Delta^{(t)}\leq 2\sqrt{\tilde{\kappa}h_t}$), and the Euclidean diameter of $\mP$.
Thus, for $\rho = \min\{1, 1/(8\beta\tilde{\kappa}D^2)\}$ by subtracting $f^*$ from both sides of \eqref{eq:mainproof:2}, we get that for any step $t\geq T_0$ which is not a drop step,
$h_{t+1} \leq \big({1 - \min\{\frac{1}{4},\frac{1}{32\beta{}D^2\tilde{\kappa}}\}}\big)h_t$.

From Observation \ref{obs:drop} we have that since the convex decomposition of $\x_{T_0}$ is supported on at most $T_0$ vertices and since on any iteration the approximation error never increases, from the above analysis we have that for all $t\geq 2T_0$,
{\small
\begin{align*}
h_t &\leq h_{T_0}\exp\Big({-\min\{\frac{1}{4},\frac{1}{\beta\kappa{}D^2}\}((t-T_0)-\frac{T_0+(t-T_0)}{2}}\Big) = h_{T_0}\exp\Big({-\min\{\frac{1}{4},\frac{1}{\beta\kappa{}D^2}\}\frac{t-2T_0}{2}}\Big).
\end{align*}}
This proves the rate in \eqref{eq:awayfw:res:2}.

Finally, we turn to prove \eqref{eq:awayfw:res:3}. Here we rely on similar arguments to those used in \cite{GueLat1986}. Using \eqref{eq:awayfw:res:1} and \eqref{eq:awayfw:res:2} we have that for $\tilde{T}_1 = O\left({1+\beta{}D^2/(\delta^2\kappa)+(1+\beta\kappa{}D^2)\log(\kappa\beta{}D/\alpha)}\right)$ it holds that for all $t\geq \tilde{T}_1$, $h_t < \min\{\alpha\delta^2/(8\beta^2D^2), \beta{}D^2/2\}$. We now observe that for all $t\geq \tilde{T}_1$, $\v\in\mV\setminus\mF^*$, $\y\in\mV\cap\mF^*$:
{\small
\begin{align}\label{eq:mainproof:4}
(\v-\y)^{\top}\nabla{}f(\x_t) &= (\v-\y)^{\top}\nabla{}f(\x_t^*) + (\v-\y)^{\top}(\nabla{}f(\x_t)-\nabla{}f(\x_t^*)) \underset{(a)}{\geq} \delta - D\beta\Vert{\x_t-\x_t^*}\Vert \nonumber\\
&\underset{(b)}{\geq} \delta - \sqrt{2}\beta{}D\sqrt{h_t}/\sqrt{\alpha} \underset{(c)}{>} \delta/2,
\end{align}}
where (a) follows from Assumption \ref{ass:strict}, the smoothness of $f(\cdot)$ and the Cauchy-Schwarz inequality, (b) follows from the quadratic growth of $f(\cdot)$, and (c) follows from our choice of $\tilde{T}_1$.
Using similar arguments we make an additional observation that for all $t\geq \tilde{T}_1$ and $\v\in\mV\setminus\mF^*$,
{\small
\begin{align}\label{eq:mainproof:5}
(\v-\x_t)^{\top}\nabla{}f(\x_t) &= (\v-\x_t^*)^{\top}\nabla{}f(\x_t^*) + (\v-\x_t^*)^{\top}(\nabla{}f(\x_t)-\nabla{}f(\x_t^*)) + (\x_t^*-\x_t)^{\top}\nabla{}f(\x_t) \nonumber \\
&\underset{(a)}{\geq}  \delta + (\v-\x_t^*)^{\top}(\nabla{}f(\x_t)-\nabla{}f(\x_t^*)) 
\geq \delta - D\beta\Vert{\x_t-\x_t^*}\Vert > \delta/2 \nonumber \\
&\underset{(b)}{>} (\x_t-\u_t)^{\top}\nabla{}f(\x_t),
\end{align}}
where (a) follows from Assumption \ref{ass:strict} and since $(\x_t^*-\x_t)^{\top}\nabla{}f(\x_t) \leq 0$, and (b) follows since from Theorem 2 in \cite{Lacoste15} we have that for all $t\geq \tilde{T}_1$, $(\x_t-\u_t)^{\top}\nabla{}f(\x_t) \leq D\sqrt{2\beta{}h_t}< \delta/2$.

From \eqref{eq:mainproof:4} it follows that for all $t\geq\tilde{T}_1$ it must hold that $\u_t\in\mV\cap\mF^*$. From \eqref{eq:mainproof:5} it follows that on every iteration $t\geq\tilde{T}_1$, if the convex decomposition of $\x_t$ assigns weight to some vertex in $\mV\setminus\mF^*$, then on that iteration the away-step must be chosen (and not a FW-step). In particular, using \eqref{eq:mainproof:4} again it must follow that the away vertex $\z_t$ satisfies $\z_t\in\mV\setminus\mF^*$.
Moreover, on any such iteration it must also hold that $\eta_t = \eta_{\max}$, i.e., a drop step is performed. To see why the latter is true, suppose this does not hold, i.e., $\eta_t < \eta_{\max}$. Then it must hold that $(\z_t - \x_{t})^{\top}\nabla{}f(\x_{t+1}) \leq 0$, since otherwise we can reduce the objective more by again moving away from $\z_t$, i.e., the choice of $\eta_t$ was sub-optimal. However, it holds that
{\small
\begin{align*}
(\z_t - \x_{t})^{\top}\nabla{}f(\x_{t+1}) 
 &\underset{(a)}{\geq} (\z_t - \x_t)^{\top}\nabla{}f(\x_{t+1}^*)  - D\beta\Vert{\x_{t+1}-\x_{t+1}^*}\Vert \underset{(b)}{\geq} (\z_t - \x_t)^{\top}\nabla{}f(\x_{t+1}^*)  - \frac{D\beta\sqrt{2h_t}}{\sqrt{\alpha}} \\
    &= (\z_t - \x_{t+1}^*)^{\top}\nabla{}f(\x_{t+1}^*) + (\x_{t+1}^*-\x_t)^{\top}\nabla{}f(\x_{t+1}^*) - D\beta\sqrt{2h_t}/\sqrt{\alpha}\\
    &\underset{(c)}{\geq} (\z_t - \x_{t+1}^*)^{\top}\nabla{}f(\x_{t+1}^*) -h_t - D\beta\sqrt{2h_t}/\sqrt{\alpha}\\
    &\underset{(d)}{\geq} (\z_t - \x_{t+1}^*)^{\top}\nabla{}f(\x_{t+1}^*)   -2D\beta\sqrt{2h_t}/\sqrt{\alpha} \underset{(e)}{\geq} \delta   - 2D\beta\sqrt{2h_t}/\sqrt{\alpha} > 0,
\end{align*}}
where (a) follows from the smoothness of $f(\cdot)$ and the Cauchy-Schwarz inequality, (b) follows from the quadratic growth and since $h_{t+1} \leq h_t$, (c) follows from convexity of $f(\cdot)$, (d) follows using the bound on the dual-gap again (Theorem 2 in \cite{Lacoste15}): $h_t \leq (\x_t-\u_t)^{\top}\nabla{}f(\x_t) \leq D\sqrt{2\beta{}h_t} \leq D\beta\sqrt{2h_t}/\sqrt{\alpha}$, and (e) follows from Assumption \ref{ass:strict} and since $\z_t\in\mV\setminus\mF^*$.
Thus, it must hold that $\eta_t = \eta_{\max}$. Recalling that for all $t\geq\tilde{T}_1$ it holds that $\u_t\in\mV\cap\mF^*$, we have that starting from iteration $T_1 =  2\tilde{T}_1$ and onwards, all iterates must lie inside the optimal face $\mF^*$. Now, the rate in \eqref{eq:awayfw:res:3} follows from the same analysis as in the proof of \eqref{eq:awayfw:res:2}, but this time noticing that since all the iterates lie inside $\mF^*$ and the linear optimization oracle also only returns points in $\mF^*$, we can replace the bound $\Vert{\w_t}\Vert \leq D$ with the tighter bound $\Vert{\w_t}\Vert\leq{}D_{\mF^*}$.
\end{proof}

\section{Discussion}
The recently studied  linearly-converging Frank-Wolfe variants for polytopes have attracted notable attention and demonstrated superior empirical performance on several applications. Our work reveals an inherent flaw in all popular variants (at least those applicable to general polytopes) showing the worst-case convergence rate may depend on the ambient dimension which cannot explain their superior performance (Theorem \ref{thm:lowerbound}). Strict complementarity can potentially bridge this gap, at least in case the optimal face is low-dimensional (Theorem \ref{thm:main}). Low-dimensionality is expected in several popular models and applications (e.g., compressed sensing), and we motivate strict complementarity by proving it implies robustness of the optimal face to (deterministic) perturbations (Theorem \ref{thm:robust}), which may be interpreted as a certain notion of  ``well-conditionedness" of the optimization problem.

\section*{Acknowledgements}
We would like to thank Alejandro Carderera for pointing out a mistake in the last part of the proof of Theorem \ref{thm:main} in an earlier version of this paper.
This research was supported by the ISRAEL SCIENCE FOUNDATION (grant No. 1108/18).





\bibliography{bib}
\bibliographystyle{plain}






\end{document}